\newtheorem{assumption}{Assumption}
\def\R{{\mathbb R}}
\def\E{{\mathbb E}}
\def\curlyA{{\mathcal A}}
\def\curlyB{{\mathcal B}}
\def\curlyR{{\mathcal R}}
\def\curlyS{{\mathcal S}}
\title[Convergence Analysis of Robust Deep Learning]{Robust Deep Learning as Optimal Control: \\ Insights and Convergence Guarantees}
\author{%
 \Name{Jacob H. Seidman} \Email{seidj@sas.upenn.edu}\\
 \Name{Mahyar Fazlyab} \Email{mahyarfa@seas.upenn.edu}\\
  \Name{Victor M. Preciado} \Email{preciado@seas.upenn.edu}
  \\
 \Name{George J. Pappas} \Email{pappasg@seas.upenn.edu}\\
}
\begin{document}

\maketitle

\begin{abstract}%
The fragility of deep neural networks to adversarially-chosen inputs has motivated the need to revisit deep learning algorithms. Including adversarial examples during training is a popular defense mechanism against adversarial attacks. This mechanism can be formulated as a min-max optimization problem, where the adversary seeks to maximize the loss function using an iterative first-order algorithm while the learner attempts to minimize it. However, finding adversarial examples in this way causes excessive computational overhead during training. By interpreting the min-max problem as an optimal control problem, it has recently been shown that one can exploit the compositional structure of neural networks in the optimization problem to  improve the training time significantly. In this paper, we provide the first convergence analysis of this adversarial training algorithm by combining techniques from robust optimal control and inexact oracle methods in optimization. Our analysis sheds light on how the hyperparameters of the algorithm affect the its stability and convergence. We support our insights with experiments on a robust classification problem.

\end{abstract}

\begin{keywords}%
Adversarial training, optimal control, maximum principle, robust optimization%
\end{keywords}

\section{Introduction}

Deep neural networks have repeatedly demonstrated their capacity to achieve state of the art performance on benchmark machine learning problems \cite{lecun2015deep}. However, their performance can be significantly affected by small input perturbations that can drastically change the network's output \cite{szegedy2013intriguing}.  
%
In safety-critical applications the cost of such errors is prohibitive.  
Therefore, an important line of work has emerged to train deep neural networks to be robust to adversarially-chosen perturbations.  

Among the most empirically successful methods is an optimization-based approach, where adversarial training is formulated as a min-max non-convex optimization problem \cite{madry2018towards}. To solve this problem, the adversary seeks to maximize the loss over sets of admissible perturbations, typically using an iterative method such as Projected Gradient Descent (PGD) \cite{madry2018towards}, the Fast Gradient Sign method (FGSM) \cite{goodfellowfgsm}, or other methods \cite{carlini2017towards}. The learner's goal is then to minimize the worst-case loss, as computed by the adversary, over the parameters of the neural network. In practice, however, the adversary can only approximate the worst-case loss. Additionally, each iteration of the adversary requires one backpropagation through the network. This results in a multiplicative factor increase in the number of backpropagations needed for training, which can significantly increase the total training time.


Nevertheless, it was shown in \cite{zhang2019you} that the computational cost for the adversary can be significantly reduced by exploiting the inherent compositional structure of deep neural networks.  In particular, by viewing a $T$-layer neural network as a discrete-time dynamical system with time horizon $T$, the min-max robust optimization problem can be seen as a finite-horizon robust optimal control problem.  In this interpretation, the adversary is finding the worst-case additive perturbation to the initial condition of the system (this is a special case of the $H_\infty$ control problem \cite{bacsar2008h}).  The learner then minimizes the worst-case cost function over the parameters of the network.  Deriving the necessary conditions for this robust control problem from the Pontryagin Maximum Principle (PMP) leads to algorithm proposed in \cite{zhang2019you}.  While the algorithm is empirically very successful, its convergence analysis has not yet been addressed. 

In this paper, we give the first convergence proof of this optimal control inspired robust training algorithm.  By viewing the adversary updates as being derived from the costate process of the deep network dynamics, we bound the error from the adversary's updates to its true gradients.  This allows us to appeal to results on first order methods with inexact oracles and prove a convergence result for this algorithm which explicitly shows the dependence on the algorithm parameters.  The argument we construct provides an outline for future results on the convergence of computationally efficient robust training algorithms. Our result further suggest that for a fixed number of backpropagations, increasing the number of adversary updates past a certain point can have a negative effect on performance.  This insight is supported by experiments on a robust classification problem.

\textbf{Preliminaries and Notation:} We denote by $\R^d$ the set of $d$-dimensional vectors with real valued components. The inner product is denoted $\langle \cdot, \cdot \rangle$ and the 2-norm is denoted $\|\cdot\|$.  We say a function $f$ is $L$-smooth if it has $L$-Lipschitz gradients.  For $\mu > 0$, a differentiable function $f$ is $\mu$-strongly concave if for all $x,y$, $f(x) \leq f(y) + \langle f(y),x-y\rangle - (\mu/2)\|x-y\|^2$.  If a function $f$ is $\mu$-strongly concave and $L$-smooth, then for all $x,y$, $\|x-x^\star\| \leq 1/\mu \|\nabla f(x)\| \leq (1/\mu) \sqrt{2 L(f(x^\star) - f(x))}$, where $x^\star = \mathrm{argmax}_x f(x)$ \cite{boyd2004convex}.  For a compact set $\mathcal X$ we define its diameter as $D(\mathcal X) := \max_{x,x' \in \mathcal X} \|x - x'\|$.

\section{Robust Training Problem Formulation}

Consider a $T$-layer deep neural network with hidden dimensions $n_1,\ldots,n_{T}$ described by $F(x,\theta)\colon$ $\mathbb{R}^{d_x} \times \mathbb{R}^{d_\theta} \to \mathbb{R}^{d_y}$, where $x$ is the input and $\theta$ are the trainable parameters.  We overload notation slightly and let the ``0-th'' layer have dimension $n_0 = d_x$ and the output layer have dimension $n_T = d_y$.
%
Given a norm-based perturbation ball $\mathcal X$ and a training dataset $\curlyS = \{(x_{0,1},y_1)\ldots (x_{0,S},y_S) \}$ of size $S$, the robust training problem can be formulated as \citep{madry2018towards}
\begin{align} \label{original robust training}
\underset{\theta}{\text{minimize}} \sum_{i=1}^{S} \max_{\eta_i \in \mathcal X} \Phi(F(x_{0,i}+\eta_i,\theta),y_i),
\end{align}
where $\Phi: \R^{n_T} \times \R^{d_y} \to \R$ is a convex surrogate loss function penalizing the difference between the predicted and true labels. Throughout we will reserve $i$ as the data index.  


\section{An Optimal Control Inspired Algorithm}


Due to their compositional structure, feed-forward deep neural networks can be viewed as dynamical systems.  This approach has been taken recently in a number of papers which explore these dynamics and use the interpretation to suggest new training algorithms \cite{weinan2017proposal, li2017maximum, li2018optimal, weinan2019mean, zhang2019you}. Explicitly, we can describe a $T$-layer deep neural network $F(x,\theta)$ by the recursion $x_{t+1}=f_t(x_t,\theta_t)$, $t=0,\cdots,T-1$, 
where $x_t \in \mathbb{R}^{n_t}$ are the states (the output of the $t$-th layer), $f_t \colon \mathbb{R}^{n_{t}} \times \mathbb{R}^{m_t} \to \mathbb{R}^{n_{t+1}}$ is the state transition map, $\theta_t \in \mathbb{R}^{m_{t}}$ are the trainable control parameters, $\theta$ is the concatenation of $(\theta_i)_{0 \leq T-1}$\footnote{With this representation, the input-output map of the neural network is $F(x,\theta)=f_{T-1}(f_{T-2}(\cdots f_0(x, \theta_0) \cdots),\theta_{T-2})\theta_{T-1})$.}, and the initial conditions are given by the inputs to the network, $x_{0,i}$. Expressing the neural network as a dynamical system allows us to rewrite problem \eqref{original robust training} as the following optimal control problem:
\begin{alignat}{2} \label{robust training prob}
&\underset{\theta_1,\ldots,\theta_T}{\text{minimize}}\;\; \underset{\eta_1,\ldots \eta_S}{\text{maximize}} \; \quad && \sum_{i=1}^S \Phi(x_{T,i},y_i) + \sum_{i=1}^S \sum_{t=0}^{T-1} R_t(x_{t,i}, \theta_t) \\
&\text{subject to} && x_{t+1,i} = f_t(x_{t,i},\theta_t), \quad i=1,\ldots,S, \quad t = 1,\ldots, T-1 \nonumber \\
& 			&& x_{1,i} = f_0(x_{0,i} + \eta_i, \theta_0). \quad i=1,\ldots,S \nonumber
\end{alignat}
where $R_t$ is a potential regularizer on the states and controls for the $t$-th layer.  The two-player Pontryagin Maximum principle, proved in \cite{zhang2019you} gives necessary conditions for an optimal setting of the parameters $\theta^\star$, perturbations $\eta_1^\star, \ldots, \eta_S^\star$, and corresponding trajectories $\{x_{t,i}^\star\}$.  Define the Hamiltonians
\begin{align} \label{ham def}
    H_t(x,p,\theta) &:= p^\top f_t(x,\theta) - R_t(x,\theta), \quad t = 1,\ldots, T-1 \\
    H_0(x,p,\theta,\eta) &:= p^\top f_0(x+\eta,\theta) - R_0(x,\theta).
\end{align}
The two player maximum principle says in this case that if $\Phi$, $f_t$, and $R_t$ are twice continuously differentiable, with respect to $x$, uniformly bounded in $x$ and $t$ along with their partial derivatives, and the image sets $\{f_t(x,\theta)\;|\;\theta \in \R^{d_\theta}\}$ and $\{R_t(x,\theta)\;|\;\theta \in \R^{d_\theta}\}$ are convex for all $x$ and $t$, then there exists an optimal costate trajectory $p_{t,i}^\star$ such that the following dynamics are satisfied
\begin{align} \label{ham dynamics}
    x_{t+1,i}^\star &= \nabla_p H_t(x_{t,i}^\star,p_{t+1,i}^\star,\theta_t^\star), \quad x_{1,i}^\star = \nabla_p H_0(x_{0,i},p_{1,i}^\star,\theta_0^\star, \eta_i^\star) \\
    p_{t,i}^\star &= \nabla_x H_t(x_{t,i}^\star,p_{t+1,i}^\star,\theta_t^\star), \quad p_{T,i}^\star = -\nabla_x \Phi(x_{T,i}^\star,y_i), \label{eq: costate dynamics}
\end{align}
and the following Hamiltonian condition for all $\theta_t \in \R^{d_{\theta_t}}$ and $\eta_i \in \mathcal X$
\begin{align} \label{eq: ham max}
    \sum_{i \in \curlyS}H_t(x_{t,i}^\star, p_{t+1,i}^\star, \theta_t) &\leq \sum_{i \in \curlyS} H_t(x_{t,i}^\star, p_{t+1,i}^\star, \theta_t^\star), \quad t = 1,\ldots, T-1 \\
    \sum_{i \in \curlyS} H_0(x_{t,i}^\star, p_{t+1,i}^\star, \theta_t, \eta_i^\star) &\leq \sum_{i \in \curlyS} H_0(x_{t,i}^\star, p_{t+1,i}^\star, \theta_t^\star, \eta_i^\star)  \leq  \sum_{i \in \curlyS}H_0(x_{t,i}^\star, p_{t+1,i}^\star, \theta_t^\star, \eta_i).
\end{align}

These necessary optimality conditions can be used to design an iterative algorithm of the following form. For each data point $i \in \{1,\cdots,S\}$,
\begin{enumerate}
\item Compute the state and costate trajectories $\{x_{i,t}\}$ and $\{p_{i,t}\}$ from \eqref{eq: p dynamics}, keeping $\theta_t$ and $\eta_i$ fixed:
\begin{align} \label{eq: p dynamics}
x_{t+1,i}^\eta &= \nabla_p H_t(x_{t,i}^\eta,p_{t+1,i}^\eta,\theta_t), \quad x_{1,i}^\eta = \nabla_p H_0(x_{0,i},p_{1,i}^\eta,\theta_0, \eta) \\
    p_{t,i}^\eta &= \nabla_x H_t(x_{t,i}^\eta,p_{t+1,i}^\eta,\theta_t), \quad p_{T,i}^\eta = -\nabla_x \Phi(x_{T,i}^\eta,y_i). \label{eq: p dynamics 2}
\end{align} 
\item Minimize the Hamiltonian $H_0(x_{t,i}, p_{t+1,i}, \theta_t, \eta_i)$ with respect to $\eta_i$.
\item Maximize the sum of Hamiltonians $\sum_{i \in \curlyS} H_t(x_{t,i}, p_{t+1,i},\theta_t)$ with respect to $\theta_t$ for all $t$.
\end{enumerate}


 As was noticed as early as \cite{lecun1988theoretical}, it can be seen from the chain rule that the backward costate dynamics in \eqref{eq: p dynamics 2} are equivalent to backpropagation through the network. With this interpretation, the gradient of the total loss for the $i$-th data point with respect to the adversary $\eta_i$ can be written as $\nabla_\eta f_0(x_{0,i} + \eta_i,\theta_0)^\top p_{1,i}^{\eta_i}$. For a fixed value of $\theta_0$, performing gradient descent on $H_0$ to find a worst-case adversarial perturbation can be expressed as the following updates, where $\alpha > 0$ is a step size and we have for the moment dropped the dependence on the data index $i$.
\begin{align} \label{adversary gd}
    \eta^{\ell + 1} = \eta^\ell - \alpha \nabla_\eta f_0(x_0 + \eta^\ell,\theta_0)^\top p_1^{\eta^\ell}.
\end{align}
An important observation made in \cite{zhang2019you} is that the adversary is only present in the first layer Hamiltonian condition and this function can be minimized by computing gradients only with respect to the first layer of the network.  More explicitly, instead of using $p_1^{\eta^\ell}$, as in the updates in \eqref{adversary gd}, we could instead use $p_1^{\eta^0}$ and the updates
\begin{align} \label{yopo eta update}
    \eta^{\ell+1} = \eta^{\ell} - \alpha\nabla_\eta f_0(x_{0} + \eta^{\ell},\theta_0)^\top p_{1}^{\eta^0}.
\end{align}

This removes the need to do a full backpropagation to recompute the costate $p_1^{\eta^\ell}$ for every update of $\eta^\ell$, at the cost of now being an approximate gradient.  In other words, we work with ``frozen gradients'' of the later layers.  This inspires the ``YOPO-$m$-$n$'' (You Only Propogate Once) algorithm in \cite{zhang2019you}, where the adversary is updated with $m$ full backpropagations, after each of which $n$ updates of the form \eqref{yopo eta update} are performed.  A modified version of this method is written in pseudocode with the Hamiltonian framework in mind in Algorithm \ref{alg: YOPO}.  While in \cite{zhang2019you}, Algorithm \ref{alg: YOPO} was shown to have very promising empirical results, in this paper we provide a rigorous convergence analysis of its behavior.

\begin{algorithm2e}[t] \label{alg: YOPO}
  \SetAlgoLined
  Initialize $\theta^0$ randomly\;
  \For{$k=1,2,\ldots$}{
    	Randomly select mini-batch $\mathcal B$\;
    	Randomly initialize $\eta_i^{0,0} \in \mathcal X$, $i \in \{1,\ldots, B\}$\;
    	\For{$j = 0,\ldots, m-1$}{
    		$x_{1,i} \leftarrow \nabla_p H_0(x_{0,i},p_{1,i},\theta_0,\eta_i^{j,0})$, \quad  $i \in \{1,\ldots, B\}$\;
		\For{$t = 1, \ldots, T-1$}{
			$x_{t+1,i} \leftarrow \nabla_p H_t(x_{t,i},p_{t+1,i},\theta_t)$ \;
		}
		$p_{i,T} \leftarrow -\frac{1}{B} \nabla \Phi(x_{T,i},y_i)$,\quad  $i \in \{1,\ldots, B\}$\;
		\For{$t=T-1,\ldots,1$}{
			$p_{t,i} \leftarrow \nabla_x H_t(x_{t,i},p_{t+1,i},\theta_t)$, \quad $i \in \{1,\ldots, B\}$\;
		}
		\For{$\ell=0,\ldots, n-1$}{
			$\eta_{i}^{j,\ell+1} \leftarrow \Pi_\mathcal{X}\left[\eta_{i}^{j,\ell} - \alpha\nabla_\eta H_0(x_{0,i},p_{1,i},\theta_0,\eta_i^{j,\ell})\right]$, \quad $i \in \{1,\ldots, B\}$\;
		}
    	}
	$\theta^{k+1} \leftarrow \theta^k - \gamma_t \frac{1}{B} \sum_{i=1}^B \nabla_\theta \Phi(F(x_{i,0} + \eta_i^{m,n},\theta^k),y_i)$\;
    }
\caption{You Only Propagate Once (YOPO-m-n) Robust Traning Algorithm}
\end{algorithm2e}

\section{Convergence Analysis of Adversarial Training}

To prove convergence we interpret Algorithm \ref{alg: YOPO} as consisting of two nested gradient methods with inexact gradient oracles.  The inner method finds an adversarial perturbation by performing gradient descent on the Hamiltonian $H_0$ with frozen gradients at layers 2 through $T$, or equivalently, with a frozen costate $p_1$.  Not updating this costate at every iteration is what creates the oracle error for the adversary's problem.  By bounding the difference of the frozen costate to the actual costate, we are able to bound the oracle error and appeal to known inexact oracle convergence results for the adversary's problem.

The outer method then makes a parameter update to the network based on the perturbation found by the inner method.  If the inner method found the true worst case perturbation, this would result in an exact gradient update.  However, since in general the adversary's inner method will not converge to the true optimal point in finitely many iterations, and is an inexact method itself, the update for the network parameters can also be seen as coming from an inexact gradient oracle. Using the convergence result for the adversary then lets us bound the oracle error for the outer method, and we can then complete the proof with known techniques for convergence of gradient descent on non-convex functions with an inexact oracle.  All proofs are deferred to the appendix.   

We first set up some notation. For a given data point $i$, let $\mathcal A_i(\eta,\theta) := 
\Phi(F(x_{0,i} + \eta, \theta),y_i)$. Let $\eta^\star_i(\theta):= \mathrm{argmax}_{\eta} \curlyA_i(\eta,\theta)$.  We define the robust loss function $\curlyR (\theta) := (1/S)\sum_{i=1}^S \curlyA_i(\eta_i^\star(\theta),\theta)$.  Let $\mathcal B$ indicate a sampled mini-batch of the data of size $B$.  Let $g_\curlyB(\theta) = (1/B)\sum_{i\in \curlyB} \nabla_\theta \curlyA(\eta_i^{\star}(\theta),\theta)$ denote the corresponding stochastic gradient of the robust loss.  Note that $\E[g_\curlyB(\theta)] = \nabla_\theta \curlyR(\theta)$ where the expectation is taken over the randomness of the mini-batch sampling.  

%
We now present the assumptions that will be in place for the theoretical results of this paper.

\begin{assumption} \label{asu: lip}
There exists a constant $K > 0$ such that for all $t \in 1,\ldots, T$, the functions $f_t$, $\Phi$, $\nabla_x f_t$, and $\nabla_x R_t$ are $K$-Lipschitz in $x$, uniformly in $\theta$.
For all $i = 1, \ldots, S$, the functions $\nabla_\theta \curlyA_i$ and $\nabla_\eta \curlyA_i$ satisfy the following Lipschitz conditions,
\begin{align}
&\|\nabla_\theta \curlyA_i(\eta,\theta^1) - \nabla_\theta \curlyA_i(\eta,\theta^2)\| \leq L_{\theta \theta} \| \theta^1 - \theta^2\|   \\
&\|\nabla_\theta \curlyA_i(\eta^1,\theta) - \nabla_\theta \curlyA_i(\eta^2,\theta)\| \leq L_{\theta \eta} \| \eta^1 - \eta^2\| \\
&\|\nabla_\eta \curlyA_i(\eta,\theta^1) - \nabla_\eta \curlyA_i(\eta,\theta^2)\| \leq L_{\eta \theta} \| \theta^1 - \theta^2\|   \\
&\|\nabla_\eta \curlyA_i(\eta^1,\theta) - \nabla_\eta \curlyA_i(\eta^2,\theta)\| \leq L_{\eta \eta} \| \eta^1 - \eta^2\|
\end{align}
\end{assumption}

Such Lipschitz assumptions are standard in the optimization literature.  Note that the assumption of the existence of the gradients in $x$ and $\eta$ of functions of the network restricts the potential activation functions of the network to not include the ReLU function, though it does allow for sigmoid, tanh, and ELU activations. Leveraging these smoothness assumptions will be essential in proving the rate of convergence that follows.  

\begin{assumption} \label{asu: sc}
 $\curlyA_i(\eta,\theta)$ is locally $\mu$-strongly concave for $\eta \in \mathcal X$, that is for any $\theta$ and $\eta^1, \eta^2 \in \mathcal X$,
 \begin{align}
     \curlyA_i(\eta^1,\theta) \leq \curlyA(\eta^2,\theta) + \langle \curlyA_i(\eta^2,\theta), \eta^1 - \eta^2\rangle - \frac{\mu}{2}\|\eta^1 - \eta^2\|^2.
 \end{align}
\end{assumption}

This assumption was made in previous results on convergence of robust training \cite{wang2019convergence} and is justified through the reformulation of robust training as distributionally robust optimization \cite{sinha2018certifiable, lee2018minimax}.  Perturbing each data point in the $\ell^p$ norm by $\epsilon$ results in perturbing the empirical distribution in the $p$-Wasserstein distance by at most $\epsilon$.

\begin{assumption} \label{asu: variance of sg}
The stochastic gradients satisfy $\E\left[\|g_\curlyB(\theta) - \nabla \curlyR(\theta)\|^2\right] \leq \sigma^2$, with $\sigma \geq 0$.
\end{assumption}

This assumption is standard in convergence results for optimization algorithms with noisy gradients.
It was shown in \cite{sinha2018certifiable} that under these assumptions the robust loss function has Lipschitz gradients and the following relation holds.  This will allow us to use techniques for convergence of gradient descent on non-convex functions.
\begin{proposition}[\cite{sinha2018certifiable}]\label{robust loss lip}
Under Assumptions \ref{asu: lip} and \ref{asu: sc}, the robust loss function $\curlyR (\theta)$ is $L$-smooth, where $L = L_{\theta \theta} + (L_{\theta \eta}L_{\eta \theta}/\mu)$, and the following inequality holds for all $\theta_1,\theta_2$,
\begin{align} \label{robust loss L smoothness}
\curlyR(\theta_1) \leq \curlyR(\theta_2) + \langle \nabla \curlyR(\theta_2),\theta_1 - \theta_2 \rangle + \frac{L}{2} \|\theta_1 - \theta_2\|^2.
\end{align}
\end{proposition}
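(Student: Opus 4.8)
The plan is to (i) identify $\nabla\curlyR$ via Danskin's theorem, (ii) show the inner maximizer $\eta_i^\star(\theta)$ is Lipschitz in $\theta$, (iii) assemble the smoothness constant, and (iv) deduce the quadratic bound from $L$-smoothness. First, since $\mathcal X$ is a compact convex set (a norm ball) and, by Assumption~\ref{asu: sc}, $\curlyA_i(\cdot,\theta)$ is $\mu$-strongly concave on $\mathcal X$, the maximizer $\eta_i^\star(\theta) = \mathrm{argmax}_{\eta\in\mathcal X}\curlyA_i(\eta,\theta)$ exists and is unique for every $\theta$. Together with differentiability of $\curlyA_i$ in $\theta$ (Assumption~\ref{asu: lip}), Danskin's theorem applies and shows $\theta\mapsto \max_{\eta\in\mathcal X}\curlyA_i(\eta,\theta)$ is differentiable with gradient $\nabla_\theta\curlyA_i(\eta_i^\star(\theta),\theta)$; averaging over $i$ gives $\nabla\curlyR(\theta) = \frac1S\sum_{i=1}^S \nabla_\theta\curlyA_i(\eta_i^\star(\theta),\theta)$.

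Next I would bound the Lipschitz constant of the argmax map. Writing $a := \eta_i^\star(\theta_1)$ and $b := \eta_i^\star(\theta_2)$, the first-order optimality (variational inequality) for the constrained maximization gives $\langle \nabla_\eta\curlyA_i(a,\theta_1), b-a\rangle \le 0$ and $\langle \nabla_\eta\curlyA_i(b,\theta_2), a-b\rangle \le 0$. Adding these, inserting $\pm\nabla_\eta\curlyA_i(a,\theta_2)$, and using strong concavity of $\curlyA_i(\cdot,\theta_2)$ in the form $\langle \nabla_\eta\curlyA_i(a,\theta_2)-\nabla_\eta\curlyA_i(b,\theta_2), a-b\rangle \le -\mu\|a-b\|^2$, I get $\mu\|a-b\|^2 \le \langle \nabla_\eta\curlyA_i(a,\theta_2)-\nabla_\eta\curlyA_i(a,\theta_1), b-a\rangle$. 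Cauchy--Schwarz together with the bound $L_{\eta\theta}$ of Assumption~\ref{asu: lip} then yields $\|\eta_i^\star(\theta_1)-\eta_i^\star(\theta_2)\| \le (L_{\eta\theta}/\mu)\|\theta_1-\theta_2\|$.

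Combining these two facts, for any $\theta_1,\theta_2$,
\begin{align*}
\|\nabla\curlyR(\theta_1)-\nabla\curlyR(\theta_2)\|
&\le \frac1S\sum_{i=1}^S \|\nabla_\theta\curlyA_i(\eta_i^\star(\theta_1),\theta_1) - \nabla_\theta\curlyA_i(\eta_i^\star(\theta_2),\theta_2)\| \\
&\le \frac1S\sum_{i=1}^S \Big( L_{\theta\theta}\|\theta_1-\theta_2\| + L_{\theta\eta}\|\eta_i^\star(\theta_1)-\eta_i^\star(\theta_2)\| \Big) \\
&\le \Big( L_{\theta\theta} + \frac{L_{\theta\eta}L_{\eta\theta}}{\mu} \Big)\|\theta_1-\theta_2\|,
\end{align*}
where the middle step inserts the intermediate point $\nabla_\theta\curlyA_i(\eta_i^\star(\theta_2),\theta_1)$ and applies the $L_{\theta\theta}$ and $L_{\theta\eta}$ bounds. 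This gives $L$-smoothness with $L = L_{\theta\theta} + L_{\theta\eta}L_{\eta\theta}/\mu$. Inequality~\eqref{robust loss L smoothness} is then the standard descent lemma: $\curlyR(\theta_1) - \curlyR(\theta_2) - \langle\nabla\curlyR(\theta_2),\theta_1-\theta_2\rangle = \int_0^1 \langle \nabla\curlyR(\theta_2+s(\theta_1-\theta_2)) - \nabla\curlyR(\theta_2),\, \theta_1-\theta_2\rangle\, ds \le \int_0^1 Ls\|\theta_1-\theta_2\|^2\, ds = \frac L2\|\theta_1-\theta_2\|^2$.

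The main obstacle is step (ii): controlling the argmax map over the constrained set $\mathcal X$. The estimate hinges on combining the two variational inequalities with strong concavity in precisely the right pairing, and on the fact that in Assumption~\ref{asu: sc} strong concavity is assumed uniformly over $\theta$ and over $\eta\in\mathcal X$; one must also check that Danskin's theorem is applicable, which compactness of $\mathcal X$ and uniqueness of the maximizer take care of. Everything else is routine bookkeeping with the Lipschitz constants of Assumption~\ref{asu: lip}.
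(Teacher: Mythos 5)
Your proposal is correct, but note the paper does not prove this proposition at all: it imports it from \cite{sinha2018certifiable}, and your argument is essentially the standard proof given there (and in \cite{wang2019convergence}) — Danskin's theorem for $\nabla\curlyR$, the variational-inequality-plus-strong-concavity argument showing $\eta_i^\star(\cdot)$ is $(L_{\eta\theta}/\mu)$-Lipschitz, and the triangle-inequality assembly of $L = L_{\theta\theta} + L_{\theta\eta}L_{\eta\theta}/\mu$ followed by the descent lemma. So there is nothing to contrast with in the paper itself; your reconstruction is a valid, self-contained substitute for the citation.
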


We next derive the following three results used to prove our main theorem.  The first result bounds the difference between the costate used for the adversary's update, as in \eqref{yopo eta update}, and the costate that would result in a true gradient update, as in \eqref{adversary gd}.  The proof shows that the costates are Lipschitz as a function of the initial condition of the system, and then uses a bound on successive values of the perturbation $\eta$ from the adversary's updates.

\begin{lemma} \label{lem: del p bound}
There exists a constant $C'$ dependent on $T$ and $K$ such that for all $\ell \in \{0,\ldots, n-1\}$, $j \in \{0,\ldots, m\}$, and $i \in \{1,\ldots, S\}$
\begin{align}\label{del p1 bound}
\|p_{1,i}^{\eta_i^{j,0}} - p_{1,i}^{\eta_i^{j,\ell}}\| \leq C' \alpha (n-1).
\end{align}
\end{lemma}

Hence, we are able to bound the error incurred from the frozen costates of the adversary's updates to the true gradients.  In doing so, we can appeal to convergence results for inexact oracles and prove the next theorem on convergence of the adversary to a worst case perturbation. 

\begin{theorem} \label{thm nesty}
Under Assumptions \ref{asu: lip}, \ref{asu: sc}, and \ref{asu: variance of sg}, for a fixed value of $\theta$ and fixed data point $i$, let
\begin{align}
\hat \eta_i = \underset{\substack{j=1,\ldots,m \\ \ell = 1,\ldots, n}}{\mathrm{argmin}} \;\;\| \nabla_\eta \curlyA_i(\eta_i^{j,\ell},\theta)\|.
\end{align}
Then, if we define $C = K C'$ and set $\alpha < 1/L_{\eta \eta}$, then
\begin{align} \label{eq: adversary convergence}
\| \nabla_\eta \curlyA_i(\hat \eta_i,\theta)\|^2\leq D(\mathcal X)L_{\eta \eta}^2  \left(1 - \frac{\mu}{L_{\eta \eta}}\right)^{mn+1} + \frac{2 C^2}{L_{\eta \eta}} (n-1)^2\left(\frac{2}{\mu} + \frac{1}{2L_{\eta \eta}}\right),
\end{align}
\end{theorem}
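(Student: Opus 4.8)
The plan is to recognize the inner ($j$- and $\ell$-) loops of Algorithm~\ref{alg: YOPO} as an \emph{inexact} projected gradient ascent on $\eta\mapsto\curlyA_i(\eta,\theta)$ over $\mathcal X$, to quantify the oracle error introduced by the frozen costates using Lemma~\ref{lem: del p bound}, and then to invoke a standard inexact-oracle convergence rate for strongly concave smooth maximization. Concretely: in the update $\eta_i^{j,\ell+1}=\Pi_{\mathcal X}[\eta_i^{j,\ell}-\alpha\nabla_\eta H_0(x_{0,i},p_{1,i},\theta_0,\eta_i^{j,\ell})]$ the costate $p_{1,i}$ is frozen at $p_{1,i}^{\eta_i^{j,0}}$ throughout the current $j$-block, whereas the true gradient $\nabla_\eta\curlyA_i(\eta_i^{j,\ell},\theta)$ uses the costate $p_{1,i}^{\eta_i^{j,\ell}}$; since by the backpropagation identity both are obtained by left-multiplying a layer-$1$ costate by $\nabla_\eta f_0(x_{0,i}+\eta_i^{j,\ell},\theta_0)^\top$, the search direction differs from the true gradient only through the costate mismatch $p_{1,i}^{\eta_i^{j,0}}-p_{1,i}^{\eta_i^{j,\ell}}$.

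First I would bound this mismatch. By Assumption~\ref{asu: lip}, $f_0(x_{0,i}+\cdot,\theta_0)$ is $K$-Lipschitz, so $\nabla_\eta f_0(x_{0,i}+\eta,\theta_0)$ has operator norm at most $K$; combining with Lemma~\ref{lem: del p bound} gives, for all admissible $j,\ell$,
\[
\bigl\|\nabla_\eta H_0(x_{0,i},p_{1,i}^{\eta_i^{j,0}},\theta_0,\eta_i^{j,\ell})-\nabla_\eta\curlyA_i(\eta_i^{j,\ell},\theta)\bigr\|\le K\,\|p_{1,i}^{\eta_i^{j,0}}-p_{1,i}^{\eta_i^{j,\ell}}\|\le KC'\alpha(n-1)=C\alpha(n-1).
\]
Concatenating the $m$ blocks into one sequence of $mn$ projected updates $\eta^0,\dots,\eta^{mn}$, every step uses $\nabla_\eta\curlyA_i$ perturbed by an error of norm at most $\Delta:=C\alpha(n-1)$ (this error vanishes whenever $\ell=0$, i.e.\ immediately after a costate recomputation, and grows at most linearly in $\ell$ within a block).

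Next I would run the inexact-oracle argument. Set $g:=-\curlyA_i(\cdot,\theta)$, which by Assumptions~\ref{asu: lip}--\ref{asu: sc} is $\mu$-strongly convex and $L_{\eta\eta}$-smooth on $\mathcal X$, with minimizer $\eta_i^\star(\theta)$ and optimal value $g^\star$. For $\alpha<1/L_{\eta\eta}$ the descent lemma applied to the (projected, inexact) step gives a one-step decrease $g(\eta^{k+1})\le g(\eta^k)-\tfrac{\alpha}{2}\|\nabla g(\eta^k)\|^2+\tfrac{\alpha}{2}\Delta^2$ (up to the usual gradient-mapping terms from $\Pi_{\mathcal X}$), while strong convexity yields the Polyak--\L{}ojasiewicz inequality $\|\nabla g(\eta^k)\|^2\ge 2\mu\,(g(\eta^k)-g^\star)$. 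Hence the gap $h_k:=g(\eta^k)-g^\star$ satisfies $h_{k+1}\le(1-\alpha\mu)\,h_k+\tfrac{\alpha}{2}\Delta^2$; unrolling over the $mn$ updates and summing the geometric series,
\[
h_{mn}\le(1-\alpha\mu)^{mn}\,h_0+\frac{\Delta^2}{2\mu},\qquad h_0\le\tfrac{L_{\eta\eta}}{2}\|\eta^0-\eta_i^\star(\theta)\|^2\le\tfrac{L_{\eta\eta}}{2}D(\mathcal X)^2,
\]
the bound on $h_0$ following from smoothness and the definition of the diameter. Finally, since $\hat\eta_i$ minimizes $\|\nabla_\eta\curlyA_i(\cdot,\theta)\|$ over all iterates, $\|\nabla_\eta\curlyA_i(\hat\eta_i,\theta)\|^2\le\|\nabla g(\eta^{mn})\|^2\le 2L_{\eta\eta}\,h_{mn}$; substituting the displayed bound, $\Delta=C\alpha(n-1)$ with $C=KC'$, and estimating the step-size factors (the contraction $1-\mu/L_{\eta\eta}$ and the exponent $mn+1$ arising from pushing $\alpha$ toward $1/L_{\eta\eta}$ and absorbing one extra iteration) delivers \eqref{eq: adversary convergence}.

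The main obstacle I anticipate is making the inexact descent step rigorous in the presence of the projection $\Pi_{\mathcal X}$: one has to work with the (inexact) gradient mapping rather than the raw gradient, verify that a PL-type inequality still holds for the constrained objective --- or, alternatively, argue that $\eta_i^\star(\theta)$ is interior so that $\Pi_{\mathcal X}$ is eventually inactive --- and carefully keep track of the fact that the oracle error is reset to zero at each of the $m$ costate recomputations yet can be as large as $C\alpha(n-1)$ within a block, so that the single level $\Delta$ may be used uniformly across all $mn$ updates. Controlling the constants to match \eqref{eq: adversary convergence} is then a matter of bookkeeping.
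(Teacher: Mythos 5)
Your overall strategy is essentially the paper's: you bound the deviation of the frozen-costate search direction from the true gradient by $K\|p_{1,i}^{\eta_i^{j,0}}-p_{1,i}^{\eta_i^{j,\ell}}\|\le KC'\alpha(n-1)=C\alpha(n-1)$, exactly as in the paper's proof of Theorem \ref{thm nesty}, then treat the inner loop as an inexact gradient method on the $\mu$-strongly concave, $L_{\eta\eta}$-smooth function $\curlyA_i(\cdot,\theta)$, and finish with $\|\nabla_\eta\curlyA_i(\hat\eta_i,\theta)\|^2\le 2L_{\eta\eta}\bigl(\max_\eta\curlyA_i(\eta,\theta)-\curlyA_i(\hat\eta_i,\theta)\bigr)$. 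The only genuine difference is the middle step: the paper packages the error $C\alpha(n-1)$ into a $(\delta,\mu/2,2L_{\eta\eta})$-oracle in the sense of \cite{devolder2013first} and cites their Theorem 4, whereas you run a self-contained descent-plus-PL recursion $h_{k+1}\le(1-\alpha\mu)h_k+\frac{\alpha}{2}\Delta^2$ with $\Delta=C\alpha(n-1)$. That substitution is sound for the unprojected updates (which is also what the paper actually analyzes, despite the $\Pi_{\mathcal X}$ in Algorithm \ref{alg: YOPO}; you at least flag the projection issue, the paper silently drops it), and your resulting additive error term $\frac{L_{\eta\eta}}{\mu}C^2\alpha^2(n-1)^2$ is in fact dominated by the second term of \eqref{eq: adversary convergence} once $\alpha<1/L_{\eta\eta}$ is used.

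The step that does not go through as written is the final bookkeeping for the first term. Your recursion produces the contraction $(1-\alpha\mu)^{mn}$, and for $\alpha<1/L_{\eta\eta}$ one has $1-\alpha\mu>1-\mu/L_{\eta\eta}$, so ``pushing $\alpha$ toward $1/L_{\eta\eta}$'' moves the inequality in the wrong direction: $(1-\alpha\mu)^{mn}$ cannot be upper-bounded by $(1-\mu/L_{\eta\eta})^{mn+1}$ uniformly over admissible step sizes, and your bound also carries $D(\mathcal X)^2$ where the statement has $D(\mathcal X)$. To literally obtain \eqref{eq: adversary convergence} you would need to fix $\alpha$ at (or tie it to) $1/L_{\eta\eta}$ rather than allow arbitrary $\alpha<1/L_{\eta\eta}$, so the closing ``matter of bookkeeping'' hides a step that fails for small $\alpha$. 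In fairness, the same looseness is present in the paper itself: Theorem 4 of \cite{devolder2013first} is stated for the step size matched to the oracle's smoothness constant, yet the paper asserts the $(1-\mu/L_{\eta\eta})^{mn+1}$ contraction for every $\alpha<1/L_{\eta\eta}$, and its own proof bounds $\|\eta^{0,0}-\eta^\star\|^2$ by $D(\mathcal X)$ rather than $D(\mathcal X)^2$. So your argument recovers the theorem in substance, by the same route up to replacing the inexact-oracle citation with an elementary PL argument, but the exact constants and contraction exponent in \eqref{eq: adversary convergence} are not delivered by the steps you describe.
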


The last intermediate result we use to prove our theorem relates how the suboptimality of the chosen adversarial perturbation bounds the error for the computed gradients of the robust loss.  To prove our main theorem we will apply the bound from the previous result to the following lemma.

\begin{lemma} \label{lem: approx gradients}
Under Assumptions \ref{asu: lip} and \ref{asu: sc}, if $\eta_i$ are such that $(1/B) \sum_{i\in \curlyB}\| \nabla_\eta \curlyA_i(\eta_i,\theta)\|^2 \leq \delta$ then
\begin{align} \label{eq: approx grad}
\left\| \frac{1}{B} \sum_{i\in \curlyB} \nabla_\theta \curlyA_\curlyB(\eta_i,\theta) - g_\curlyB(\theta) \right\| \leq \frac{L_{\theta \eta} \delta}{\mu}.
\end{align}
\end{lemma}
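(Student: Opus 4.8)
The plan is to bound the perturbation-induced gradient error one data point at a time and then average over the mini-batch. Fix $\theta$ and observe that, since $g_\curlyB(\theta) = (1/B)\sum_{i\in\curlyB}\nabla_\theta\curlyA_i(\eta_i^\star(\theta),\theta)$, the left-hand side of \eqref{eq: approx grad} is exactly the norm of the mini-batch mean of the per-datapoint differences $\nabla_\theta\curlyA_i(\eta_i,\theta)-\nabla_\theta\curlyA_i(\eta_i^\star(\theta),\theta)$ (reading $\curlyA_\curlyB$ as $\curlyA_i$ under the sum). So by the triangle inequality it suffices to bound each such difference in terms of $\|\nabla_\eta\curlyA_i(\eta_i,\theta)\|$ and then combine.

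First I would convert the gradient hypothesis into a distance-to-optimizer estimate. For fixed $\theta$, $\curlyA_i(\cdot,\theta)$ is $\mu$-strongly concave on $\mathcal X$ (Assumption \ref{asu: sc}) and $L_{\eta\eta}$-smooth in $\eta$ (Assumption \ref{asu: lip}), so the inequality recorded in the Preliminaries gives $\|\eta_i-\eta_i^\star(\theta)\|\le(1/\mu)\|\nabla_\eta\curlyA_i(\eta_i,\theta)\|$, where $\eta_i^\star(\theta)=\mathrm{argmax}_\eta\curlyA_i(\eta,\theta)$. Then the mixed-Lipschitz bound of Assumption \ref{asu: lip}, namely $\|\nabla_\theta\curlyA_i(\eta^1,\theta)-\nabla_\theta\curlyA_i(\eta^2,\theta)\|\le L_{\theta\eta}\|\eta^1-\eta^2\|$, applied with $\eta^1=\eta_i$ and $\eta^2=\eta_i^\star(\theta)$, yields
\begin{align*}
\|\nabla_\theta\curlyA_i(\eta_i,\theta)-\nabla_\theta\curlyA_i(\eta_i^\star(\theta),\theta)\|\le\frac{L_{\theta\eta}}{\mu}\|\nabla_\eta\curlyA_i(\eta_i,\theta)\|.
\end{align*}

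To close, I would average this over $i\in\curlyB$, apply the triangle inequality, and pass from the mean of the gradient norms to the square root of the mean of their squares via Jensen's (equivalently Cauchy--Schwarz) inequality, using the hypothesis $(1/B)\sum_{i\in\curlyB}\|\nabla_\eta\curlyA_i(\eta_i,\theta)\|^2\le\delta$; this delivers the stated control of $\|(1/B)\sum_{i\in\curlyB}\nabla_\theta\curlyA_i(\eta_i,\theta)-g_\curlyB(\theta)\|$ in terms of $L_{\theta\eta}$, $\mu$, and $\delta$.

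There is no real obstacle here: the statement is a two-step chaining of a curvature estimate and a Lipschitz estimate that are both already assumed. The only points requiring care are (i) invoking the distance bound $\|\eta_i-\eta_i^\star\|\le(1/\mu)\|\nabla_\eta\curlyA_i(\eta_i,\theta)\|$ when $\eta_i^\star$ is a maximizer over the constraint set $\mathcal X$, which calls for either an interiority argument or the first-order optimality inequality for strongly concave functions on a convex set, and (ii) the averaging step, where one should be explicit that the hypothesis bounds the \emph{mean of squared} gradient norms. The main role of the lemma is structural: it is the bridge that turns the adversary-convergence guarantee of Theorem \ref{thm nesty} into an inexact-oracle bound for the outer (parameter) gradient step.
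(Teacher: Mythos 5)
Your proof is correct and follows essentially the same route as the paper's: triangle inequality over the mini-batch, the mixed-Lipschitz bound $\|\nabla_\theta\curlyA_i(\eta_i,\theta)-\nabla_\theta\curlyA_i(\eta_i^\star(\theta),\theta)\|\le L_{\theta\eta}\|\eta_i-\eta_i^\star(\theta)\|$, and the strong-concavity estimate $\|\eta_i-\eta_i^\star(\theta)\|\le(1/\mu)\|\nabla_\eta\curlyA_i(\eta_i,\theta)\|$, followed by averaging. The one place you are in fact more careful than the paper is the final step: since the hypothesis bounds the mean of \emph{squared} norms, Jensen gives $(1/B)\sum_{i\in\curlyB}\|\nabla_\eta\curlyA_i(\eta_i,\theta)\|\le\sqrt{\delta}$ and hence the bound $L_{\theta\eta}\sqrt{\delta}/\mu$, whereas the paper's last inequality silently treats $\delta$ as bounding the mean of unsquared norms to arrive at \eqref{eq: approx grad}; this $\delta$ versus $\sqrt{\delta}$ mismatch is an inconsistency in the lemma's statement and the paper's proof, not a gap in your argument.
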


Combining the previous three results allows us to prove the main theorem, stated below.

\begin{theorem}[Convergence Analysis of Adversarial Training] \label{our theorem}
Under Assumptions \ref{asu: lip}, \ref{asu: sc}, and \ref{asu: variance of sg}, if the the step sizes $\gamma_t$ satisfy $\gamma_t = \gamma = \min\{1/L, \sqrt{\Delta/(L \sigma^2 N)}\}$ where $\Delta = \curlyR(\theta^0) - \inf_\theta \curlyR(\theta)$, $\alpha < 1/L_{\eta \eta}$, and the parameters are updated with the perturbations $\eta_i = \underset{\j\in\{1,\ldots,m\},\ell \in \{ 1,\ldots, n\}}{\mathrm{argmin}} \;\;\| \nabla_\eta \curlyA_i(\eta_i^{j,\ell},\theta)\|,$ then there exists a constant $C$ depending on $T$ and $K$ such that the iterates of YOPO-$m$-$n$ satisfy
\begin{align}\label{YOPO convergence}
\frac{1}{N} \sum_{k=1}^N \E\left[\|\nabla \curlyR(\theta^k)\|^2\right] \leq 4 \sigma \sqrt{\frac{L \Delta}{N}} + \frac{5 L_{\theta \eta}^2}{\mu}\left(D(\mathcal X)L_{\eta \eta}^2  \left(1 - \frac{\mu}{L_{\eta \eta}}\right)^{mn+1}\!\!\!\! + \frac{2C^2}{L_{\eta \eta}} \left(\frac{2}{\mu} + \frac{1}{2L_{\eta \eta}}\right)(n-1)^2\right).
\end{align}
\end{theorem}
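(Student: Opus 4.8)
The plan is to view the parameter update in Algorithm~\ref{alg: YOPO} as one step of stochastic gradient descent on the $L$-smooth function $\curlyR$ (Proposition~\ref{robust loss lip}) driven by an oracle that is simultaneously stochastic (mini-batching) and \emph{biased} (the perturbations $\eta_i$ feeding the gradient are only approximate inner maximizers). Writing $\tilde g_\curlyB(\theta^k) := (1/B)\sum_{i\in\curlyB}\nabla_\theta\curlyA_i(\eta_i,\theta^k)$ for the direction actually taken, I would decompose
\begin{align}
\tilde g_\curlyB(\theta^k) = \nabla\curlyR(\theta^k) + \big(g_\curlyB(\theta^k) - \nabla\curlyR(\theta^k)\big) + \big(\tilde g_\curlyB(\theta^k) - g_\curlyB(\theta^k)\big),
\end{align}
where the middle term is conditionally zero-mean with second moment $\le\sigma^2$ (Assumption~\ref{asu: variance of sg}) and the last is the deterministic bias that must be controlled.

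First I would bound the bias. Conditioning on the $\sigma$-algebra generated by $\theta^0,\ldots,\theta^k$, the value $\theta^k$ is fixed, so Theorem~\ref{thm nesty} — whose right-hand side is uniform in $\theta$ and already accounts, through $D(\mathcal X)$, for the worst-case inner initialization $\eta_i^{j,0}$ — applies to each $i\in\curlyB$ with the chosen $\eta_i$ and gives $\|\nabla_\eta\curlyA_i(\eta_i,\theta^k)\|^2\le\delta$, with $\delta := D(\mathcal X)L_{\eta\eta}^2(1-\mu/L_{\eta\eta})^{mn+1} + (2C^2/L_{\eta\eta})(n-1)^2(2/\mu+1/(2L_{\eta\eta}))$ and $C=KC'$. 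Averaging over $\curlyB$ verifies the hypothesis of Lemma~\ref{lem: approx gradients}, which then bounds $\|\tilde g_\curlyB(\theta^k)-g_\curlyB(\theta^k)\|$ by a constant multiple of $\delta$ (through the factor $L_{\theta\eta}/\mu$). Crucially, this bias does not shrink with $N$; it is controlled only by the hyperparameters $m$, $n$, and $\alpha$.

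Next I would run the standard inexact-oracle analysis for SGD on a non-convex $L$-smooth function. Applying the descent inequality~\eqref{robust loss L smoothness} along $\theta^{k+1}=\theta^k-\gamma\tilde g_\curlyB(\theta^k)$, taking conditional expectations, inserting the decomposition above (so the zero-mean term vanishes in the inner product and inflates $\E\|\tilde g_\curlyB(\theta^k)\|^2$ by at most a $\sigma^2$ term), and splitting the cross term with the bias by Young's inequality, one obtains, for $\gamma\le 1/L$, a one-step recursion of the form $\E[\curlyR(\theta^{k+1})]\le\E[\curlyR(\theta^k)]-c\gamma\,\E\|\nabla\curlyR(\theta^k)\|^2 + c'\gamma\,b^2 + c''L\gamma^2\sigma^2$, where $b$ is the bias bound above. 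Telescoping from $k=1$ to $N$, using $\Delta=\curlyR(\theta^0)-\inf_\theta\curlyR(\theta)$, dividing by $c\gamma N$, and plugging in $\gamma=\min\{1/L,\sqrt{\Delta/(L\sigma^2 N)}\}$ collapses the first two terms to the $4\sigma\sqrt{L\Delta/N}$ contribution, while the bias term contributes a quantity independent of $\gamma$ and $N$; carrying the constants through Lemma~\ref{lem: del p bound} $\to$ Theorem~\ref{thm nesty} $\to$ Lemma~\ref{lem: approx gradients} yields exactly the $(5L_{\theta\eta}^2/\mu)$-multiple of the bracketed expression in~\eqref{YOPO convergence}.

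The \textbf{main obstacle} is the bias term: unlike the mini-batch noise $\sigma^2$, it is not averaged out by the iteration, so it must be kept on the right-hand side throughout the telescoping, and one must check that the stepsize rule chosen to minimize the $\sigma$-dependent term does not inflate it — which is precisely why the non-vanishing $D(\mathcal X)L_{\eta\eta}^2(1-\mu/L_{\eta\eta})^{mn+1}$ and $(n-1)^2$ pieces survive into~\eqref{YOPO convergence}. A secondary subtlety is compatibility of the three auxiliary results: Theorem~\ref{thm nesty} is stated for a fixed $\theta$ while here $\theta=\theta^k$ is random, handled by the conditioning step; and Lemma~\ref{lem: approx gradients} requires the mini-batch average of $\|\nabla_\eta\curlyA_i(\eta_i,\theta)\|^2$, which is supplied by applying the per-datum bound of Theorem~\ref{thm nesty} and averaging over $\curlyB$.
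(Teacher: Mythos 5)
Your proposal is correct and follows essentially the same route as the paper's proof: decompose the update direction into the true gradient, the zero-mean mini-batch noise (bounded via Assumption \ref{asu: variance of sg}), and the deterministic bias bounded by chaining Theorem \ref{thm nesty} into Lemma \ref{lem: approx gradients}, then apply the $L$-smoothness descent inequality, take conditional expectations, telescope, and plug in $\gamma = \min\{1/L, \sqrt{\Delta/(L\sigma^2 N)}\}$. The paper organizes the same estimate directly through inner-product expansions and Young-type splittings rather than an explicit three-term decomposition, but the argument and the resulting bound are the same.
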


The first term on the right side is typical for convergence of first order optimization algorithms on smooth non-convex functions, and is the same as the first term that appears in the convergence result of \cite{wang2019convergence}.  The second term represents the errors from both inexact oracles that accumulate over the algorithm.  The expression
\begin{align}
    \mathcal E(m,n) := D(\mathcal X)L_{\eta \eta}^2  \left(1 - \frac{\mu}{L_{\eta \eta}}\right)^{mn+1} + \frac{2C^2}{L_{\eta \eta}} \left(\frac{2}{\mu} + \frac{1}{2L_{\eta \eta}}\right)(n-1)^2,
\end{align}
shows how the solution to the adversary's problem contributes to the gradient oracle error for the parameter updates.  The first term represents the approximate nature of the adversary's solution, as it has finitely many iterations to maximize the loss function.  The second term shows the accumulation of gradient oracle errors for the adversary due to freezing the costate in between backpropagations.

Using this bound we can investigate the dependence of the algorithm on the number of backpropagations for the adversary, $m$, and the number of gradient steps taken with each frozen gradient, $n$.  We see that $\mathcal E$ monotonically decreases in $m$, implying that a practitioner should set $m$ to be as large as can be tolerated according to their computational budget. Thus, we will focus on the dependence of $\mathcal E$ on the number of adversary updates per backpropagation, $n$.

First we note that $\mathcal E$ is convex in $n$, as can be confirmed by computing its second partial derivative and observing $\partial^2 \mathcal E/\partial n^2 \geq 0$.  As $\partial \mathcal E/\partial n$ is monotonically increasing, we should only increase $n$ up to just before the point where $\partial \mathcal E/\partial n$ becomes positive.  This happens when
\begin{align}
     -\log\left(1 - \frac{\mu}{L_{\eta \eta}}\right)D(\mathcal X)L_{\eta \eta}^2 m\left(1 - \frac{\mu}{L_{\eta \eta}}\right)^{mn+1} \leq \frac{4C^2}{L_{\eta \eta}}\left(\frac{2}{\mu} + \frac{1}{2L_{\eta \eta}}\right) (n-1),
\end{align}
that is, when the exponentially decaying factor in $n$ on the left side overtakes the linearly growing factor in $n$ on the right side.  Therefore, our bound suggests that when $n$ is too large we will obtain lower robust accuracy, even though the adversary is given more updates to find a worst case perturbation.  We demonstrate this phenomenon with a robust classification experiment on the MNIST dataset, as shown in Figure \ref{fig}. 

\begin{figure}[!tbp] \label{fig}
  \centering
  \begin{subfigure}{\includegraphics[width=0.49\textwidth]{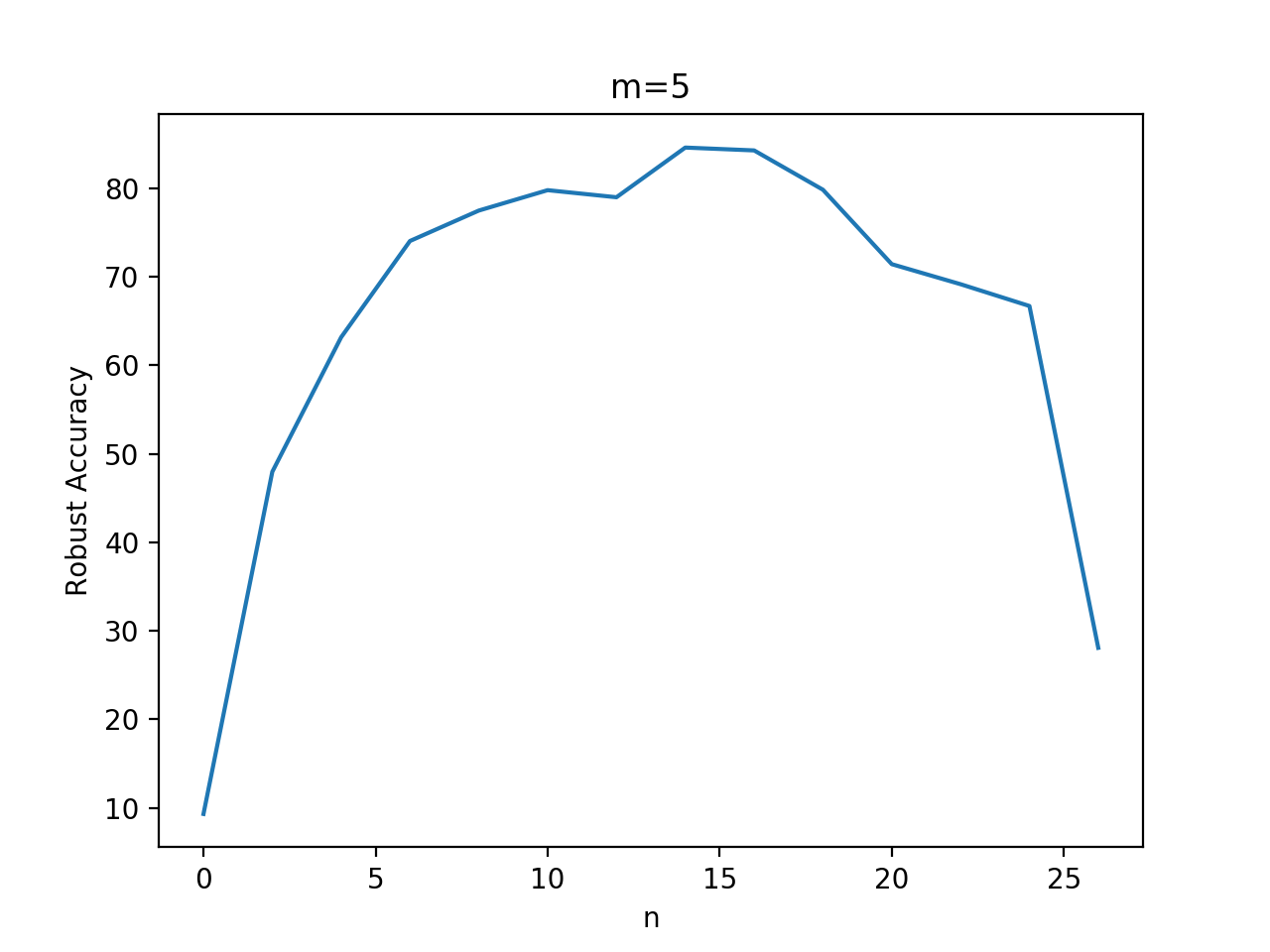}}
  \end{subfigure}
  \hfill
  \begin{subfigure}{\includegraphics[width=0.49\textwidth]{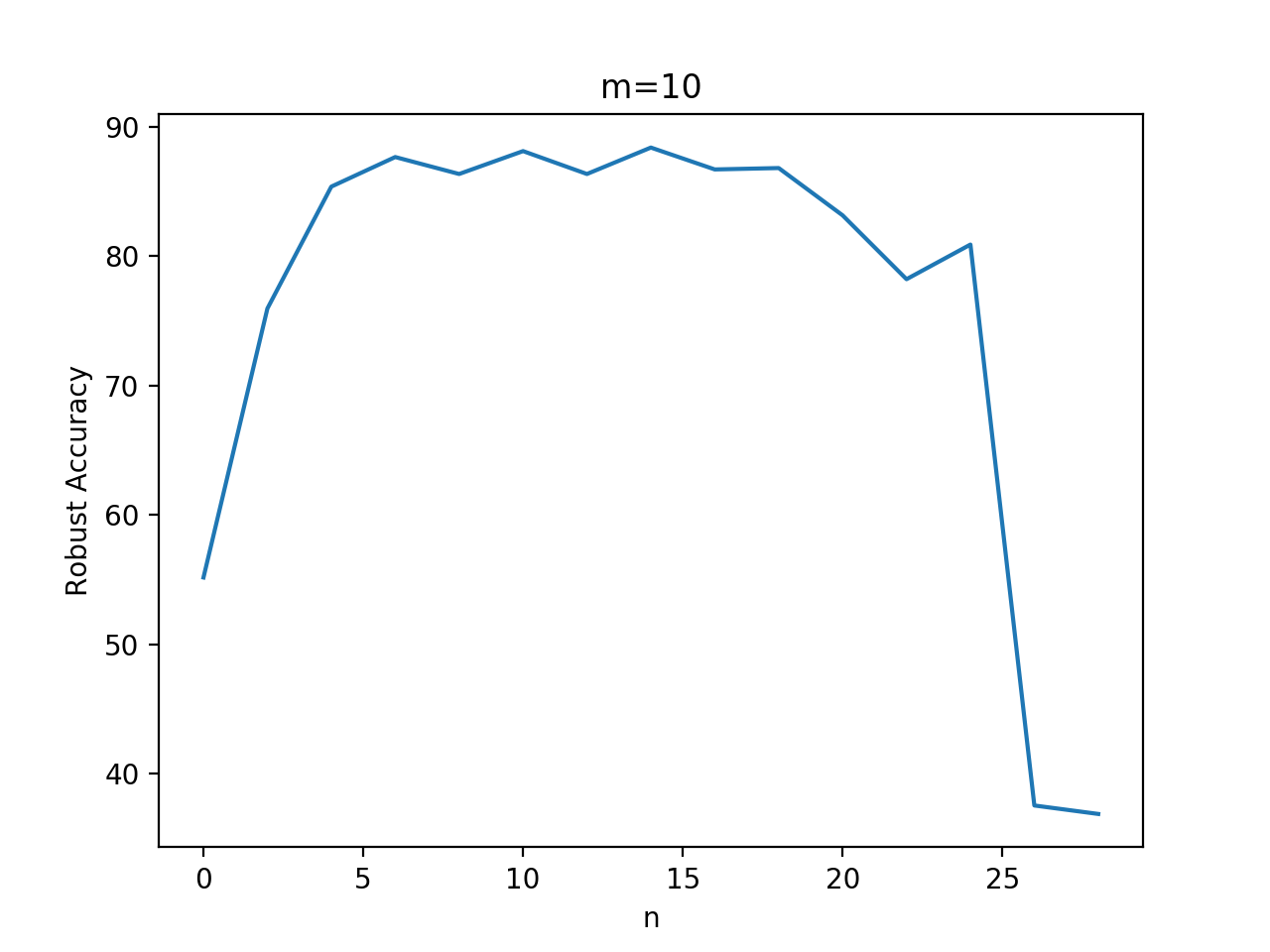}}
  \end{subfigure}
  \caption{Robust Accuracy after training with YOPO-m-n after 10 epochs.  Figure on the left shows how accuracy changes with $m=5$ fixed and varying $n$, figure on the right is the same for $m=10$ and varying $n$.  In both figures we see that performance degrades quickly in $n$ after a certain point, as predicted by Theorem \ref{our theorem}.}
\end{figure}

This observation is reminiscent of results in the literature on the Method of Successive Approximations (MSA) for finding controls and trajectories which satisfy the PMP.  These methods alternate between computing state and costate trajectories and maximize the Hamiltonian to update the control.  We can interpret the adversary's updates as a MSA variant for the adversary's Hamiltonian minimization condition.  It has been shown that if the new controls result in trajectories that deviate too far from the trajectories used in the Hamiltonian (in our case resulting in a larger oracle error) these methods will not converge \cite{chernousko1982method}. This is consistent with the interpretation of our result.

\section{Conclusion}


We give the first convergence analysis for a recently proposed robust training algorithm for deep neural networks.  By using methods from optimal control theory and results from inexact oracle methods in optimization, we shed light on the behavior of the algorithm as a function of its hyperparamters.  It is likely that the interpretation of PMP-based algorithms as inexact oracle methods can be used to prove convergence for other learning algorithms inspired by optimal control, such as the MSA variants proposed in \cite{li2018optimal}; this is left for future work.  Another avenue to explore is the behavior of approximate adversary updates in the overparameterized regime, as inspired by recent convergence results of overparameterized adversarial training with vanilla PGD \cite{gao2019convergence}.

\section{Appendix}

\begin{lemma} \label{lem: approx gradients}
Under Assumptions \ref{asu: lip} and \ref{asu: sc}, if $\eta_i$ are chosen such that $(1/B) \sum_{i\in \curlyB}\| \nabla_\eta \curlyA_i(\eta_i,\theta)\|^2 \leq \delta$ then
\begin{align} \label{eq: approx grad}
\left\| \frac{1}{B} \sum_{i\in \curlyB} \nabla_\theta \curlyA_\curlyB(\eta_i,\theta) - g_\curlyB(\theta) \right\| \leq \frac{L_{\theta \eta} \delta}{\mu}.
\end{align}
\end{lemma}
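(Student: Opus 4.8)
The plan is to compare, one datapoint at a time, the gradient $\nabla_\theta\curlyA_i(\eta_i,\theta)$ actually fed to the parameter update against the exact gradient $\nabla_\theta\curlyA_i(\eta_i^\star(\theta),\theta)$ that enters $g_\curlyB(\theta)$, and then average over the mini-batch. Because $g_\curlyB(\theta)=(1/B)\sum_{i\in\curlyB}\nabla_\theta\curlyA_i(\eta_i^\star(\theta),\theta)$ by definition, the left-hand side of \eqref{eq: approx grad} is exactly $\bigl\|(1/B)\sum_{i\in\curlyB}\bigl(\nabla_\theta\curlyA_i(\eta_i,\theta)-\nabla_\theta\curlyA_i(\eta_i^\star(\theta),\theta)\bigr)\bigr\|$, so by the triangle inequality it suffices to control each per-datapoint difference $\|\nabla_\theta\curlyA_i(\eta_i,\theta)-\nabla_\theta\curlyA_i(\eta_i^\star(\theta),\theta)\|$ and then average.

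First I would bound the distance from $\eta_i$ to the adversary's optimizer. By Assumption \ref{asu: sc}, $\eta\mapsto\curlyA_i(\eta,\theta)$ is $\mu$-strongly concave on $\mathcal X$ (and by Assumption \ref{asu: lip} it is smooth), so $\eta_i^\star(\theta)$ is its unique maximizer over $\mathcal X$ and the inequality recalled in the Preliminaries gives $\|\eta_i-\eta_i^\star(\theta)\|\le(1/\mu)\,\|\nabla_\eta\curlyA_i(\eta_i,\theta)\|$. Next, Assumption \ref{asu: lip} states that $\nabla_\theta\curlyA_i$ is $L_{\theta\eta}$-Lipschitz in $\eta$, so
\begin{align}
\left\|\nabla_\theta\curlyA_i(\eta_i,\theta)-\nabla_\theta\curlyA_i(\eta_i^\star(\theta),\theta)\right\|\le L_{\theta\eta}\,\|\eta_i-\eta_i^\star(\theta)\|\le\frac{L_{\theta\eta}}{\mu}\,\left\|\nabla_\eta\curlyA_i(\eta_i,\theta)\right\|.
\end{align}
Averaging over $i\in\curlyB$, the triangle inequality and then Jensen's inequality (equivalently Cauchy--Schwarz) give
\begin{align}
\left\|\frac1B\sum_{i\in\curlyB}\nabla_\theta\curlyA_i(\eta_i,\theta)-g_\curlyB(\theta)\right\|\le\frac{L_{\theta\eta}}{\mu}\cdot\frac1B\sum_{i\in\curlyB}\left\|\nabla_\eta\curlyA_i(\eta_i,\theta)\right\|\le\frac{L_{\theta\eta}}{\mu}\left(\frac1B\sum_{i\in\curlyB}\left\|\nabla_\eta\curlyA_i(\eta_i,\theta)\right\|^2\right)^{1/2},
\end{align}
and the hypothesis $(1/B)\sum_{i\in\curlyB}\|\nabla_\eta\curlyA_i(\eta_i,\theta)\|^2\le\delta$ closes the estimate. (The natural conclusion of this chain carries a square root, $L_{\theta\eta}\sqrt\delta/\mu$; this is also the form in which the lemma is consumed inside the proof of Theorem \ref{our theorem}, where $\delta$ is the averaged-squared stationarity bound supplied by Theorem \ref{thm nesty}.)

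I do not expect a genuine obstacle: every step is either an invocation of a stated assumption or a textbook inequality. The points worth care are (i) making sure $\eta_i^\star(\theta)\in\mathcal X$, so that the \emph{local} strong concavity of Assumption \ref{asu: sc} applies along the segment joining $\eta_i$ and $\eta_i^\star(\theta)$, and that this maximizer is unique; if $\eta_i^\star(\theta)$ lies on the boundary of $\mathcal X$ one replaces the stationarity condition $\nabla_\eta\curlyA_i(\eta_i^\star(\theta),\theta)=0$ by the variational inequality $\langle\nabla_\eta\curlyA_i(\eta_i^\star(\theta),\theta),\eta-\eta_i^\star(\theta)\rangle\le 0$ for $\eta\in\mathcal X$, which combined with $\mu$-strong concavity still yields $\|\eta_i-\eta_i^\star(\theta)\|\le(1/\mu)\|\nabla_\eta\curlyA_i(\eta_i,\theta)\|$; and (ii) bookkeeping the dependence on the adversary's stationarity measure, since the chain above is affine in $\|\nabla_\eta\curlyA_i(\eta_i,\theta)\|$ and passing to the batch-averaged squared quantity $\delta$ introduces the square root that should be tracked into Theorem \ref{our theorem}.
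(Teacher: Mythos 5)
Your proof follows the paper's argument essentially verbatim: triangle inequality over the mini-batch, the $L_{\theta\eta}$-Lipschitzness of $\nabla_\theta\curlyA_i$ in $\eta$, and $\mu$-strong concavity to bound $\|\eta_i-\eta_i^\star(\theta)\|$ by $(1/\mu)\|\nabla_\eta\curlyA_i(\eta_i,\theta)\|$, then averaging. Your parenthetical about the square root is also well taken: the paper's own last inequality passes from $\frac{L_{\theta\eta}}{B\mu}\sum_{i\in\curlyB}\|\nabla_\eta\curlyA_i(\eta_i,\theta)\|$ directly to $\frac{L_{\theta\eta}\delta}{\mu}$ even though the hypothesis bounds the averaged \emph{squared} norms, so the honest conclusion of this chain is $L_{\theta\eta}\sqrt{\delta}/\mu$ (equivalently one must read $\delta$ as bounding the averaged unsquared norms), exactly as your Jensen/Cauchy--Schwarz step makes explicit.
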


\begin{proof}
Using the definitions of $\curlyA_\curlyB$ and $g_\curlyB$,
\begin{align}
\left\| \frac{1}{B} \sum_{i\in \curlyB} \nabla_\theta \curlyA_\curlyB(\eta_i,\theta) - g_\curlyB(\theta) \right\| &\leq \frac{1}{B} \sum_{i \in \curlyB} \| \nabla_\theta \curlyA_i(\eta_i,\theta) - \nabla_\theta \curlyA_i(\eta_i^\star(\theta),\theta)\| \\
& \leq \frac{L_{\theta\eta}}{B} \sum_{i \in \curlyB} \| \eta_i - \eta_i^\star(\theta)\| \\
&\leq \frac{L_{\theta\eta}}{B \mu} \sum_{i \in \curlyB} \| \nabla \curlyA_i(\eta_i,\theta)\| \\
&\leq \frac{L_{\theta\eta} \delta}{\mu},
\end{align}
where the second to last inequality follows from the $\mu$-strong concavity of $\curlyA_i$ with respect to $\eta$.
\end{proof}

%
%
%

Next, we present a lemma bounding the difference of the co-states of the first layer in the adversary's inner loop.  This will allow us to prove a convergence result for the adversary.  We fix the outer loop index at arbitrary $j$, the data point $i$ and for ease of notation drop the dependence of state variables on the index $i$.  We define $x_t^{\eta}$ and $p_t^{\eta}$ as the state and co-state trajectories generated from the initial condition $x_0 + \eta$. Define $x_t^\ell := x_t^{\eta^{j,\ell}}$,  $p_t^{\ell} := p_t^{\eta^{j,\ell}}$, and $\eta^{\ell} := \eta^{j,\ell}$.  We will additionally define $\delta p_t^\ell := p_t^{0} - p_t^{\ell}$ and $\delta x_t^\ell := x_t^0 - x_t^{\ell}$.

\begin{lemma} \label{lem: del p bound}
There exists a constant $C'$ dependent on $T$ and $K$ such that for all $\ell \in \{0,\ldots, n-1\}$, $j \in \{0,\ldots, m\}$, and $i \in \{1,\ldots, S\}$
\begin{align}\label{del p1 bound}
\|p_{1,i}^{\eta_i^{j,0}} - p_{1,i}^{\eta_i^{j,\ell}}\| \leq C' \alpha (n-1).
\end{align}
\end{lemma}
\begin{proof}
We will suppress the notational dependence on $\theta$ for all functions, as $\theta$ is fixed during the updates for the adversary $\eta$. We first prove bounds on $\|p_t^\ell\|$ and $\|\delta x_t^\ell\|$.  From the costate dynamics we have $\|p_T^\ell\| \leq  \|- \nabla \Phi(x_T^\ell,y)\| \leq K$, and
\begin{align}
\|p_t^{\ell}\| = \|\nabla_xH_t(x_t^\ell,p_{t+1}^\ell,\theta_t)\| \leq \|p_{t+1}^\ell\| \|\nabla_x f_t(x_t^\ell,\theta_t)\|  + \|\nabla_x R_t(x_t^\ell)\| \leq K \|p_{t+1}^\ell\| + K, 
\end{align}
so by induction
\begin{align} \label{eq: p bound}
    \| p_t^{\ell} \|\leq K + K^2 + \ldots + K^{T-t+1} \leq K^{T-t+1}(T-t+1).
\end{align}
Next, from Assumption \ref{asu: lip} we have that $\|\delta x_1^\ell\| = \| f_0(x_0 + \eta^0) - f_0(x_0 + \eta^\ell)\| \leq K \|\eta^0 - \eta^\ell\|$, so by induction we have
\begin{align} \label{eq: del x bound}
\|\delta x_t^\ell\| \leq K^t \|\eta^0 - \eta^{\ell}\|.
\end{align}
To bound $\|p_1^{0} - p_1^{\ell}\|$ we first note that $\|\delta p_T^{\ell}\| = \| \nabla \Phi(x_T^{\ell}) - \nabla \Phi(x_T^0)\| \leq K \|\delta x_T^\ell\| $.  We write
\begin{align}
\|\delta p_t^\ell\| &= \|\nabla_x H_t(x_t^0,p_{t+1}^0) - \nabla_x H_t(x_t^{\ell}, p_{t+1}^{\ell})\| \nonumber\\
&= \|\nabla_x H_t(x_t^0,p_{t+1}^0) - \nabla_x H_t(x_t^{\ell},p_{t+1}^0) + \nabla_x H_t(x_t^{\ell},p_{t+1}^0) -  \nabla_x H_t(x_t^{\ell}, p_{t+1}^{\ell})\| \nonumber\\
&= \|\langle p_{t+1}^0, \nabla_xf_t(x_t^0) - \nabla_x f_t(x_t^{\ell})\rangle + \langle p_{t+1}^0 - p_{t+1}^{\ell},\nabla_x f_t(x_t^{\ell})\rangle + \nabla_x R_t(x_t^{\ell}) - \nabla_x R_t(x_t^{0})\| \nonumber\\
&\leq \|p_{t+1}^0\| \|\nabla_x f_t(x_t^0) \!-\! \nabla_x f_t(x_t^{\ell})\| +\| p_{t+1}^0 - p_{t+1}^{\ell}\| \|\nabla_x f_t(x_t^{\ell})\| + \|\nabla_x R_t(x_t^{\ell}) \!-\! \nabla_x R_1(x_t^{0})\| \nonumber\\
&\leq (K^{T-t+1}(T-t)+K) \| \delta x_t^\ell\| + K \|\delta p_{t+1}^\ell\|,
\end{align}
where in the last line we have used \eqref{eq: p bound} and the Lipschitz assumptions on $f$, $\nabla_x f$, and $L$.  An application of the discrete Gronwall inequality then gives
\begin{align}
\| \delta p_t^\ell \| \leq K^{T-1} \left(K \| \delta x_T^\ell\| + \sum_{t=1}^{T-1} (K^{T-t+1}(T-t)+K)\| \delta x_t^\ell \|\right).
\end{align}
Applying \eqref{eq: del x bound} gives
\begin{align} \label{del p bound by eta diff}
\| \delta p_1^\ell \| \leq( K^T + T(T-1)K^{2T-2}+TK^{2T} ) \| \eta^0 - \eta^{\ell}\|.
\end{align}
Lastly, we may bound the right side of \eqref{del p bound by eta diff} by recalling that $\eta^{\ell}$ is obtained from $\eta^{0}$ after $n$ updates of the form $\eta^{s+1} = \eta^{s} + \alpha \nabla_\eta f_0(x+ \eta^{s},\theta)^\top p_1^{0}$. An application of \eqref{eq: p bound} and the Lipschitz assumption of $f_0$ then give
\begin{align} \label{eq: eta diff bound}
\| \eta^0 - \eta^{\ell}\| \leq K^{T+1} \alpha (n-1),
\end{align}
Combining \eqref{eq: eta diff bound} with \eqref{del p bound by eta diff} and defining $C' = K^{T+1}(K^T + T(T-1)K^{2T-2}+TK^{2T})$ completes the proof.
\end{proof}

\begin{theorem} \label{thm nesty}
Under Assumptions \ref{asu: lip}, \ref{asu: sc}, and \ref{asu: variance of sg}, for a fixed value of $\theta$ and fixed data point $i$, let
\begin{align}
\hat \eta_i = \underset{\substack{j=1,\ldots,m \\ \ell = 1,\ldots, n}}{\mathrm{argmin}} \;\;\| \nabla_\eta \curlyA_i(\eta_i^{j,\ell},\theta)\|.
\end{align}
Then, if we define $C = K C'$,
\begin{align} \label{eq: adversary convergence}
\| \nabla_\eta \curlyA_i(\hat \eta_i,\theta)\|^2\leq D(\mathcal X)L_{\eta \eta}^2  \left(1 - \frac{\mu}{L_{\eta \eta}}\right)^{mn+1} + \frac{2C^2}{L_{\eta \eta}} \alpha^2 (n-1)^2\left(\frac{2}{\mu} + \frac{1}{2L_{\eta \eta}}\right),
\end{align}
\end{theorem}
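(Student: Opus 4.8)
The plan is to read the adversary's inner loop in Algorithm~\ref{alg: YOPO} as inexact projected gradient ascent on the map $f(\eta):=\curlyA_i(\eta,\theta)$ and then invoke a standard bounded-error first-order-oracle estimate for strongly concave smooth objectives. Since $R_0$ does not depend on $\eta$, the backpropagation identity gives $\nabla_\eta H_0(x_{0,i},p_{1,i},\theta_0,\eta)=\nabla_\eta f_0(x_{0,i}+\eta,\theta_0)^\top p_{1,i}$, which equals $\nabla_\eta\curlyA_i(\eta,\theta)$ (up to the costate sign convention) when $p_{1,i}=p_{1,i}^{\eta}$; hence the update $\eta_i^{j,\ell+1}=\Pi_{\mathcal{X}}[\eta_i^{j,\ell}-\alpha\nabla_\eta H_0(x_{0,i},p_{1,i},\theta_0,\eta_i^{j,\ell})]$ is projected gradient ascent on $f$ with a perturbed gradient. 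By Assumptions~\ref{asu: lip} and~\ref{asu: sc}, $f$ is $\mu$-strongly concave and $L_{\eta\eta}$-smooth on the convex set $\mathcal{X}$, and its maximizer $\eta^\star:=\eta_i^\star(\theta)$ lies in $\mathcal{X}$.

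The first step is to bound the oracle error. At inner iterate $\eta_i^{j,\ell}$ the update uses the frozen costate $p_{1,i}^{\eta_i^{j,0}}$ in place of $p_{1,i}^{\eta_i^{j,\ell}}$, so the error relative to $\nabla_\eta f(\eta_i^{j,\ell})$ is $\nabla_\eta f_0(x_{0,i}+\eta_i^{j,\ell},\theta_0)^\top(p_{1,i}^{\eta_i^{j,0}}-p_{1,i}^{\eta_i^{j,\ell}})$, whose norm is at most $K\,\|p_{1,i}^{\eta_i^{j,0}}-p_{1,i}^{\eta_i^{j,\ell}}\|\le KC'\alpha(n-1)=C\alpha(n-1)=:\epsilon$, using the bound $\|\nabla_\eta f_0\|\le K$ from Assumption~\ref{asu: lip} together with Lemma~\ref{lem: del p bound}. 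Moreover the $m$ blocks of $n$ inner updates concatenate into a single sequence of $mn$ projected ascent steps with step size $\alpha$ (the last iterate of one block initializes the next), each step carrying gradient error of norm at most $\epsilon$.

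The second step is a one-step progress inequality. For an update $\eta^{+}=\Pi_{\mathcal{X}}[\eta+\alpha(\nabla f(\eta)+e)]$ with $\|e\|\le\epsilon$ and $\alpha<1/L_{\eta\eta}$, I would combine concavity and $L_{\eta\eta}$-smoothness of $f$ with nonexpansiveness of $\Pi_{\mathcal{X}}$, split the cross term $\langle\nabla f(\eta),e\rangle$ with Young's inequality, and use the gradient-dominance inequality $\|\nabla f(\eta)\|^2\ge 2\mu(f^\star-f(\eta))$ implied by strong concavity, to obtain a contraction-plus-error bound $f^\star-f(\eta^{+})\le(1-c\mu\alpha)(f^\star-f(\eta))+c'\epsilon^2$ with explicit constants; tracking those constants through this step and through the summation below is what produces the precise coefficients in the second term of \eqref{eq: adversary convergence}. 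Iterating over the $mn$ concatenated steps and summing the geometric series of error contributions yields $f^\star-f(\eta_i^{m,n})\le(1-c\mu\alpha)^{mn}(f^\star-f(\eta_i^{m,0}))+c''\epsilon^2$. Then, bounding the initial suboptimality in terms of the diameter $D(\mathcal{X})$ via $L_{\eta\eta}$-smoothness, and converting the suboptimality of $\eta_i^{m,n}$ back to a gradient norm through the defining minimality of $\hat\eta_i$ over the iterates $\eta_i^{j,\ell}$ and $L_{\eta\eta}$-smoothness, $\|\nabla f(\hat\eta_i)\|^2\le\|\nabla f(\eta_i^{m,n})\|^2\le 2L_{\eta\eta}(f^\star-f(\eta_i^{m,n}))$ (this last smoothness step accounting for the $+1$ in the exponent), gives the claimed bound.

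The step I expect to be the main obstacle is the inexact-oracle bookkeeping itself: choosing the Young split and handling the gap between the step size $\alpha$ and $1/L_{\eta\eta}$ so that the accumulated error collapses to precisely the factor $\tfrac{2C^2}{L_{\eta\eta}}\alpha^2(n-1)^2(\tfrac{2}{\mu}+\tfrac{1}{2L_{\eta\eta}})$ rather than a looser constant, while keeping the projection $\Pi_{\mathcal{X}}$ and the per-block resetting of the oracle error consistent throughout; alternatively one can cite an off-the-shelf inexact gradient-descent estimate for strongly convex objectives and only verify that its hypotheses hold here. A minor secondary point is to note that $\eta^\star\in\mathcal{X}$ and that every $\eta_i^{j,\ell}$ is an image of $\Pi_{\mathcal{X}}$, so the locally valid strong-concavity and gradient-dominance inequalities apply along the entire trajectory. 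Once the one-step inequality is pinned down, the remaining summation is routine.
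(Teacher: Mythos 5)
Your proposal is correct and follows essentially the same route as the paper: bound the frozen-costate gradient error by $C\alpha(n-1)$ via Lemma~\ref{lem: del p bound} and the Lipschitz bound on $\nabla_\eta f_0$, treat the inner loop as inexact gradient ascent on the $\mu$-strongly concave, $L_{\eta\eta}$-smooth function $\curlyA_i(\cdot,\theta)$, then convert the resulting suboptimality to a gradient-norm bound via smoothness and bound the initial distance by $D(\mathcal X)$. The constant-tracking step you flag as the main obstacle is handled in the paper exactly by your ``off-the-shelf'' alternative: the bounded error is packaged as a $(\delta,\mu/2,2L_{\eta\eta})$-oracle with $\delta = C^2\alpha^2(n-1)^2\bigl(\tfrac{2}{\mu}+\tfrac{1}{2L_{\eta\eta}}\bigr)$ and Theorem~4 of \cite{devolder2013first} is applied, which yields the exponent $mn+1$ and the precise second term.
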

\begin{proof}
We drop the dependence of all functions on $\theta$ and the data point index $i$ for the proof.  The inner loop of the adversary's updates can be written as
\begin{align}
\eta^{j,\ell+1} = \eta^{j,\ell} + \alpha\nabla_\eta f_0(x + \eta)^\top p_1^{\eta^{j,0}},
\end{align}
where $j$ is the iteration of the adversary's outer loop.  Recall that the true gradient of $\curlyA(\eta^{j,\ell})$ is,
\begin{align}
\nabla_\eta \curlyA(\eta^{j,\ell}) = \nabla_\eta f_0(x + \eta)^\top p_1^{\eta^{j,\ell}}.
\end{align}
We will bound the maximum difference of the update vector to the true gradient, over the iterations of the adversary's updates.  In this sense, the adversary's updates can be viewed as standard gradient method with an inexact gradient oracle.  We write,
\begin{align}
\|\nabla_\eta f_0(x + \eta)^\top p_1^{\eta^{j,0}} - \nabla_\eta \curlyA(\eta^{j,\ell})\| &= \|\nabla_\eta f_0(x + \eta)^\top p_1^{\eta^{j,0}} - \nabla_\eta f_0(x+\eta)^\top p_1^{\eta^{j,\ell}}\| \\
&\leq \|p_1^{\eta^{j,\ell}}-p_1^{\eta^{j,0}}\| \| \nabla_\eta f_0(x + \eta)\|.
\end{align}
Using the Lipschitz assumption on $f_0$, Lemma \ref{lem: del p bound}, and the definition of $C$ gives the bound
\begin{align} \label{eq: adv oracle bound}
\|\nabla_\eta f_0(x + \eta)^\top p_1^{\eta^{j,0}} - \nabla_\eta \curlyA(\eta^{j,\ell})\| \leq C \alpha (n-1).
\end{align} 
Equation \eqref{eq: adv oracle bound} shows that the updates are $C \alpha(n-1)$-approximate gradients for $\curlyA(\eta^{j,\ell})$.  

We now appeal to an inexact oracle convergence result in \cite{devolder2013first}.  Given a concave function $f(y)$ and a point $y$, we define a $(\delta, \mu, L)$ oracle as returning a vector $g(y)$ such that the following inequality holds.
\begin{align} \label{oracle ineq}
    \frac{\mu}{2} \|x - y\|^2 \leq f(x) - f(y) + \langle g(y), y - x\rangle \leq \frac{L}{2}\|x - y\|^2 + \delta.
\end{align}

It can be shown that if we have an approximate gradient bound of the form \eqref{eq: adv oracle bound}, and $\curlyA$ is $L_{\eta \eta}$-smooth and $\mu$-strongly concave in $\eta$, then the updates for the adversary are created by a $(\delta, \mu/2, 2L_{\eta \eta})$-oracle, where 
\begin{align}
\delta = C^2 \alpha^2 (n-1)^2\left(\frac{2}{\mu} + \frac{1}{2L_{\eta \eta}}\right).
\end{align}
Letting $\alpha < 1/L_{\eta \eta}$ and applying Theorem 4 in \cite{devolder2013first}, along with the inequality $\|\nabla \curlyA(\hat \eta)\|^2 \leq 2L_{\eta \eta}(\max_\eta \curlyA( \eta) - \curlyA(\hat \eta))$ from the $L_{\eta \eta} $ smoothness of $\curlyA$ in $\eta$ gives
\begin{align}
\| \nabla_\eta \curlyA_i(\hat \eta_i,\theta)\|^2\leq  L_{\eta \eta}^2 \|\eta^{0,0} - \eta^\star\|^2 \left(1 - \frac{\mu}{L_{\eta \eta}}\right)^{mn+1} + \frac{2 C^2}{L_{\eta \eta}} (n-1)^2\left(\frac{2}{\mu} + \frac{1}{2L_{\eta \eta}}\right).
\end{align}
Where $\eta^\star$ is the true solution to the inner maximization problem.  Since we initialize $\eta^{0,0} \in \mathcal X$ we have that $\|\eta^{0,0} - \eta^\star\|^2 \leq D(\mathcal X)$, which completes the proof.

\end{proof}

We are now ready to prove Theorem \ref{our theorem}.

\begin{proof}
In this proof, we define $\hat g_\curlyB(\theta^k) := \frac{1}{B} \sum_{i \in \curlyB} \nabla_\theta \curlyA_i(\hat \eta_i^k,\theta^k)$, where $\hat \eta_i^k$ is the output of the adversary's inner problem at iteration $k$, and $g_\curlyB(\theta^k) := \frac{1}{B} \sum_{i \in \curlyB}\nabla_\theta \curlyA_i(\eta_i^\star(\theta^k),\theta^k) $ is the true stochastic gradient.  With this notation, the updates for $\theta^k$ are
\begin{align}
\theta^{k+1} = \theta^k - \gamma_t \hat g_\curlyB(\theta^k).
\end{align}
The proof proceeds similarly as in \cite{ghadimi2013stochastic}, \cite{sinha2018certifiable}, and \cite{wang2019convergence}.  We begin with the inequality for the $L$-smoothness of $\mathcal R(\theta)$ applied to two successive iterates $\theta^k$ and $\theta^{k+1}$.
\begin{align}
\curlyR(\theta^{k+1}) &\leq \curlyR(\theta^k) + \langle \nabla L(\theta^k), \theta^{k+1} - \theta^k \rangle + \frac{L}{2} \| \theta^{k+1} - \theta^k\|^2 \\
&= \curlyR(\theta^k) + \langle \nabla \curlyR(\theta^k), - \gamma_t \hat g_\curlyB(\theta^k) \rangle + \frac{L \gamma_t^2}{2} \| \hat g_\curlyB(\theta^k)\|^2 \\
&= \curlyR(\theta^k) - \gamma_t\left(1 - \frac{L \gamma_t}{2}\right) \| \nabla \curlyR(\theta^k)\|^2 \nonumber\\
&\quad + \gamma_t\left(1 - \frac{L \gamma_t}{2}\right) \langle \nabla \curlyR(\theta^k), \nabla \curlyR(\theta^k) - \hat g_\curlyB(\theta^k)\rangle + \frac{L\gamma_t}{2} \| \hat g_\curlyB(\theta^k) - \nabla \curlyR(\theta^k)\|^2 \\
&\leq \curlyR(\theta^k) - \frac{\gamma_t}{2}\left(1 - \frac{L \gamma_t}{2}\right)\|\nabla \curlyR(\theta^k)\|^2 + \frac{\gamma_t}{2}\left(1 - \frac{L \gamma_t}{2}\right) \|\hat g_\curlyB(\theta^k) - g_\curlyB(\theta^k) \|^2\nonumber\\
& \quad + \gamma_t \left(1 + \frac{L \gamma_t}{2}\right) \langle \nabla \curlyR(\theta^k), \nabla \curlyR(\theta^k) - g_\curlyB(\theta^k)\rangle \nonumber\\
& \quad + L \gamma_t^2 \left(\|\hat g_\curlyB(\theta^k) - g_\curlyB(\theta^k) \|^2 + \|g_\curlyB(\theta^k) - \nabla \curlyR(\theta^k)\|^2\right).
\end{align}
Rearranging, using Theorem \ref{thm nesty} and Lemma \ref{lem: approx gradients}, taking the expectation with respect to the randomness of $\curlyB$ and conditioning on $\theta^k$ gives
\begin{align}
\E[\curlyR(\theta^{k+1}) - \curlyR(\theta^k)\;|\;\theta^k] &\leq -\frac{\gamma_t}{2}\left(1 - \frac{L \gamma_t}{2}\right)\| \nabla \curlyR(\theta^k)\|^2\nonumber \\
& \quad + \frac{\gamma_t}{2}\left(1 - \frac{L \gamma_t}{2}\right) \frac{L_{\theta \eta}}{\mu}\left(D(\mathcal X)L_{\eta \eta}^2  \left(1 - \frac{\mu}{L_{\eta \eta}}\right)^{mn+1}\!\!\!\! + \frac{2 C^2}{L_{\eta \eta}} (n-1)^2\left(\frac{2}{\mu} + \frac{1}{2L_{\eta \eta}}\right)\right) \nonumber\\
& \quad + L \gamma_t^2\left( \frac{L_{\theta \eta}}{\mu}\left(D(\mathcal X)L_{\eta \eta}^2  \left(1 - \frac{\mu}{L_{\eta \eta}}\right)^{mn+1}\!\!\!\! + \frac{2 C^2}{L_{\eta \eta}} (n-1)^2\left(\frac{2}{\mu} + \frac{1}{2L_{\eta \eta}}\right)\right) + \sigma^2\right).
\end{align}
Taking the expectation of both sides again and summing these inequalities from $k=0$ to $N-1$ gives
\begin{align}
\sum_{k=0}^{N-1} \frac{\gamma_t}{2}\left(1 - \frac{L \gamma_t}{2}\right) \E[\|\nabla \curlyR(\theta^k)\|^2] &\leq \E[\curlyR(\theta^0) - \curlyR(\theta^K)] + L \sum_{k=0}^{N-1} \gamma_t^2 \sigma^2 \nonumber \\
& \quad + \sum_{k=0}^{N-1} \frac{\gamma_t}{2}\left(1 + \frac{3L \gamma_t}{2}\right)\frac{L_{\theta \eta}}{\mu}\Bigg(D(\mathcal X) L_{\eta \eta}^2 \left(1 - \frac{\mu}{L_{\eta \eta}}\right)^{mn+1} \nonumber \\
& \qquad \qquad+ \frac{2 C^2}{L_{\eta \eta}}2 (n-1)^2\left(\frac{2}{\mu} + \frac{1}{2L_{\eta \eta}}\right)\Bigg).
\end{align}
The proof is completed by noting that $\curlyR(\theta^K) \geq \curlyR(\theta^\star)$, dividing through by $N$, and setting $\gamma_t = \gamma = \text{min}\{1/L, \sqrt{(\curlyR(\theta^0) - \curlyR(\theta^\star))/(L \sigma^2 N)}\}$.
\end{proof}

\bibliography{bibliography}

\end{document}